\documentclass{article}
\usepackage{amsfonts}
\usepackage{amsmath}
\usepackage{wasysym}
\usepackage{enumitem}

\newtheorem{theorem}{Theorem}[section]

\newtheorem{corollary}[theorem]{Corollary}
\newtheorem{lemma}[theorem]{Lemma}

\def \squareforqed{\hbox{\rlap{$\sqcap$}$\sqcup$}}
\def \qed{\ifmmode\squareforqed\else{\unskip\nobreak\hfil
          \penalty50\hskip1em\null\nobreak\hfil\squareforqed
          \parfillskip=0pt\finalhyphendemerits=0\endgraf}\fi}
\newenvironment{proof}{\noindent{\bf Proof} }{\qed\medskip}

\newenvironment{example}{\begin{quote}{\textbf{Example} }}{\end{quote}}

\def\notrelation#1#2{\ooalign{$\hfil#1\mkern1mu/\hfil$\crcr$#1#2$}}
\def\notdiv{{\mathpalette\notrelation{\,|}}}

\def\ZZ{{\mathbb Z}}
\def\NN{{\mathbb N}}
\def\QQ{{\mathbb Q}}
\def\RR{{\mathbb R}}

\def\Qmax{Q_{\mathrm{max}}}
\def\Mmax{M_{\mathrm{max}}}
\def\Mgood{M_{\mathrm{good}}}
\def\Mbad{M_{\mathrm{bad}}}
\def\Acrit{A_{\mathrm{crit}}}
\def\Amax{A_{\mathrm{max}}}
\def\Anext{A_{\mathrm{next}}}

\newcommand{\num}{\mathop{\mathrm{num}}\nolimits}
\newcommand{\den}{\mathop{\mathrm{den}}\nolimits}
\newcommand{\HRR}{\mathop{\mathrm{HRR}}\nolimits}

\newcommand\eg{\textit{e.g.}}
\newcommand\ie{\textit{i.e.}}

\title{Fault-Tolerant Modular Reconstruction of Rational Numbers}
\author{John Abbott}

\date{1st May 2015}

\begin{document}
\maketitle

\begin{abstract}

  In this paper we present two efficient methods for reconstructing a
  rational number from several residue-modulus pairs, some of which
  may be incorrect.  One method is a natural generalization of that
  presented by Wang, Guy and Davenport in~\cite{WGD1982} (for
  reconstructing a rational number from \textit{correct} modular
  images), and also of an algorithm presented by Abbott
  in~\cite{Abb1991} for reconstructing an \textit{integer} value from
  several residue-modulus pairs, some of which may be incorrect.  We
  compare our heuristic method with that of B\"ohm, Decker, Fieker and
  Pfister~\cite{BDFP2012}.

\end{abstract}

\medskip
\noindent
\textbf{Keywords:} fault-tolerant rational reconstruction, chinese remaindering

\section{Introduction}

The problem of intermediate expression swell is well-known in computer
algebra, but has been greatly mitigated in many cases by the use of modular
methods.  There are two principal techniques: those based on the
\textit{Chinese Remainder Theorem}, and those based on \textit{Hensel's
  Lemma}.  In this paper we consider only the former approach.

Initially modular methods were used in cases where integer values were sought
(\eg~for computing GCDs of polynomials with integer coefficients); the answer
was obtained by a direct application of the Chinese Remainder Theorem.  Then
in 1981 Wang presented a method allowing the reconstruction of
\textit{rational numbers}~\cite{Wan1981} from their modular images: the
original context was the computation of partial fraction decompositions.
Wang's idea was justified in a later paper~\cite{WGD1982} which isolated the
rational number reconstruction algorithm from the earlier paper.  More
recently, Collins and Encarnaci\'on~\cite{CoEn1994} corrected a mistake in
Wang's paper, and described how to obtain an especially efficient
implementation.  Wang's method presupposes that all residue-modulus pairs are
correct; consequently, the moduli used must all be coprime to the denominator
of the rational to be reconstructed.

A well-known problem of modular methods is that of \textbf{bad reduction}:
this means that the modular result is not correct for some reason.
Sometimes it will be obvious when the modular result is bad (and these can
be discarded), but other times it can be hard to tell.  The
\textit{Continued Fraction Method} for the fault-tolerant reconstruction of
\textit{integer values} when some of the modular images may be bad was
presented in~\cite{Abb1991}.

In this paper we consider the problem of reconstructing a rational number
from its modular images allowing for some of the modular images to be
erroneous.  We combine the corrected version of Wang's algorithm with the
\textit{Continued Fraction Method}.  Our resulting new \textit{FTRR}
Algorithm (see section~\ref{FTRR}) reconstructs rational numbers from
several modular images allowing some of them to be bad.  The \textit{FTRR}
Algorithm contains both old methods as special cases: when it is known that
all residues are correct we obtain Wang's method (as corrected
in~\cite{CoEn1994}), and if the denominator is restricted to being~$1$ then
we obtain the original \textit{Continued Fraction Method}.
Finally, we note that the correction highlighted in~\cite{CoEn1994} is a
natural and integral part of our method.

Our \textit{FTRR} Algorithm gives a strong guarantee on its result: if a
suitable rational exists then it is unique and the algorithm will find it;
conversely if no valid rational exists then the algorithm says so.  However,
the uniqueness depends on bounds which must be given in input, including an
upper bound for the number of incorrect residues.  Since this information is
often not known in advance, we present also the \textit{HRR} Algorithm (see
section~\ref{HRR})~---~it is a heuristic reconstruction technique based on
the sample principles as \textit{FTRR}.  This heuristic variant is much
simpler to apply since it requires only the residue-modulus pairs as input.
It will find the correct rational provided the correct modular images
sufficiently outnumber the incorrect ones; if this is not the case then
\textit{HRR} will usually return an \textit{indication of failure} but it may
sometimes reconstruct an incorrect rational.

In section~\ref{comparison} we briefly compare our \textit{HRR} algorithm
with the \textit{Error Tolerant Lifting} Algorithm presented
in~\cite{BDFP2012} which is based on lattice reduction, and which serves
much the same purpose as \textit{HRR}.  We mention also some
\textit{combinatorial} reconstruction schemes (presented in~\cite{Abb1991})
which can be readily adapted to perform fault tolerant rational
reconstruction.

\subsection{Envisaged Setting}
\label{envisaged-setting}

We envisage the computation of one or more rational numbers
(\eg~coefficients of a polynomial) by \textit{chinese remainder style}
modular computations where not all cases of bad reduction can be detected.
If we know upper bounds for numerator and denominator, and also for the
number of bad residue-modulus pairs then we can apply the \textit{FTRR}
algorithm of section~\ref{FTRR}.  Otherwise we apply the \textit{HRR}
algorithm of section~\ref{HRR}.  Naturally, in either case we require that
the bad residue-modulus pairs are not too common.

When using \textit{FTRR} we use the sufficient precondition
(inequality~(\ref{FTRR-precondition})) to decide whether more residue-modulus
pairs are needed; when we have enough pairs we simply apply the
reconstruction algorithm to obtain the answer.

When using \textit{HRR}, we envisage that the computation is organized as
follows.  Many modular computations are made iteratively, and every so
often an attempt is made to reconstruct the sought after rational
number(s).  If the attempt fails, further iterations are made.  If the
attempt succeeds then a check is made of the ``convincing correctness'' of
the reconstructed rational (see step~(4) of Algorithm \textit{HRR}); if the rational is not ``convincing'' then again
further iterations are made.

\medskip

The perfect reconstruction algorithm would require only the minimum number
of residue-modulus pairs (thus not wasting ``redundant'' iterations), and
never reconstructs an incorrect rational (thus not wasting time checking
``false positives'').  Our \textit{HRR} algorithm comes close to having
both characteristics.




\section{Notation and Assumptions}

We are trying to reconstruct a rational number, $p/q$, from many
residue-modulus pairs: $x_i \bmod m_i$ for $i=1,2,\ldots,s$.  For each
index~$i$ satisfying $q x_i \equiv p \bmod{m_i}$ we say that $x_i$ is a
\textbf{good residue} and $m_i$ is a \textbf{good modulus}; otherwise, if
the equivalence does not hold, we call them a \textbf{bad residue} and a
\textbf{bad modulus}.

For simplicity, we assume that the moduli $m_i$ are pairwise coprime: this
assumption should be valid in almost all applications.  For clarity of
presentation, it will be convenient to suppose that the moduli are labelled
in increasing order so that $m_1 < m_2 < \cdots < m_s$.  For our algorithms
to work well it is best if the moduli are all of roughly similar size;
otherwise, in an extreme situation where there is one modulus which is larger
than the product of all the other moduli, reconstruction cannot succeed if
that one large modulus is bad.

\smallskip
We say that a rational $p/q$ is \textbf{normalized} if $q > 0$ and $\gcd(p,q)=1$.

\subsection{Continued Fractions}

Here we recall a few facts about continued fractions; proofs and further
properties may be found in~\cite{HW1979}, for instance.

Let $x \in \RR$; then~$x$ has a unique representation as a
\textbf{continued fraction}:
\begin{equation}
x = [ a_0, a_1, a_2, \ldots ] = a_0 + \frac{1}{a_1+\frac{1}{a_2+\frac{1}{\ldots}}}
\end{equation}
where all $a_j \in \ZZ$; for $j > 0$
the integers $a_j$ are positive, and are called \textbf{partial quotients}.  If $x
\in \QQ$ then there are only finitely many partial quotients; otherwise
there are infinitely many.

We define the $k$-th \textbf{continued fraction approximant to~$x$} to be the
rational $r_k/s_k$ whose continued fraction is $[a_0, a_1, \ldots, a_k]$.
These \textit{approximants} give ever closer approximations to $x$, that is the
sequence $|x - r_k/s_k|$ is strictly decreasing.  We also have that:
\begin{equation} \label{cfa-num-den-growth}
\begin{array}{l}
\phantom{\sum} 
a_k \, r_{k-1} \quad \le \quad r_k \quad < \quad (a_k+1)\, r_{k-1} \\
\phantom{\sum}
a_k \, s_{k-1} \quad \le \quad s_k \quad < \quad (a_k+1)\, s_{k-1}
\end{array}
\end{equation}

We recall here Theorem~184 from~\cite{HW1979} which will play a crucial role.
\begin{theorem}
Let $x \in \RR$ and $\frac{r}{s} \in \QQ$.  If $|x - \frac{r}{s}| < \frac{1}{2s^2}$ then
$\frac{r}{s}$ appears as a continued fraction approximant to~$x$.
\end{theorem}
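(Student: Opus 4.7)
The plan is to reduce the statement to a standard argument about continued fractions by building an explicit extension of the continued fraction expansion of $r/s$ into one for $x$. Without loss of generality I assume $\gcd(r,s)=1$, $s\ge 1$, and $x\ne r/s$ (the equality case is trivial). I begin by writing $r/s$ itself as a finite continued fraction $[a_0,a_1,\ldots,a_n]$ with convergents $p_k/q_k$, so in particular $p_n/q_n = r/s$ and $q_n = s$. Using the identity $[a_0,\ldots,a_m]=[a_0,\ldots,a_m-1,1]$ (valid whenever $a_m>1$), I retain the freedom to fix the parity of $n$ later.

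Next I introduce the real number $\omega$ defined by the mobius relation $x=(\omega p_n+p_{n-1})/(\omega q_n+q_{n-1})$, equivalently $\omega = -(q_{n-1}x-p_{n-1})/(q_n x-p_n)$. Using the standard determinantal identity $p_n q_{n-1}-p_{n-1}q_n=(-1)^{n-1}$, a direct calculation gives
\begin{equation}
x - \frac{r}{s} \;=\; \frac{(-1)^n}{s\bigl(\omega s + q_{n-1}\bigr)}.
\end{equation}
Now I choose the parity of $n$ so that $(-1)^n$ has the same sign as $x-r/s$; this forces $\omega s + q_{n-1} > 0$, and since $q_{n-1}\ge 0$ and $s\ge 1$ one deduces $\omega>0$.

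The hypothesis $|x-r/s|<1/(2s^2)$ now becomes $s(\omega s+q_{n-1})>2s^2$, i.e.\ $\omega>2-q_{n-1}/s$. Since $q_{n-1}<q_n=s$ for $n\ge 1$ (and $q_{-1}=0$ when $n=0$, which makes the inequality even stronger), this yields $\omega>1$. Hence $\omega$ admits a continued fraction expansion $\omega=[a_{n+1},a_{n+2},\ldots]$ with $a_{n+1}\ge 1$, and splicing gives $x=[a_0,a_1,\ldots,a_n,a_{n+1},\ldots]$, so $r/s=p_n/q_n$ appears among the convergents of $x$.

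The main technical obstacle is the combination of the sign/parity choice with the inequality $\omega>1$: the mobius identity is routine, but one must be careful to pick the parity of $n$ \emph{before} invoking the bound on $|x-r/s|$, otherwise $\omega$ could come out negative and the extension step would fail. The degenerate cases ($n=0$, or $x$ rational so that $\omega$ eventually terminates) need a brief sentence each but present no real difficulty.
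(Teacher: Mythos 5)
The paper does not prove this statement itself---it is quoted verbatim as Theorem~184 of Hardy and Wright \cite{HW1979}---so there is no internal proof to compare against. Your argument is correct, and it is in fact essentially the classical proof from that source: expand $r/s$ with the parity of the length chosen to match the sign of $x-r/s$, define $\omega$ by the M\"obius relation, and show $\omega>1$ so that the expansion splices. Two small blemishes, neither fatal: the sentence deducing ``$\omega>0$'' from $\omega s+q_{n-1}>0$, $q_{n-1}\ge 0$, $s\ge 1$ is a non sequitur (those hypotheses only give $\omega>-q_{n-1}/s$), but it is also unnecessary, since the later inequality $\omega>2-q_{n-1}/s$ already yields $\omega>1$; and the claim $q_{n-1}<q_n$ is not always strict (e.g.\ $n=1$, $a_1=1$ gives $q_0=q_1=1$), but the weak inequality $q_{n-1}\le q_n=s$ is all you need to conclude $\omega>2-q_{n-1}/s\ge 1$, and in fact strictly $\omega>1$ since $1/\theta>2$ exactly when the hypothesis is strict. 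With those two sentences tightened the proof is complete.
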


\section{Main Proposition}

Our main proposition provides the key to reconstructing a rational from a \textit{single}
residue-modulus pair, $X \bmod M$.

\begin{theorem} \label{main-thm}

  Let $X \bmod M$ be a residue-modulus pair; thus $X,M \in \ZZ$ with $M \ge
  2$.  Let $P, Q \in \NN$ be positive bounds for numerator and
  denominator respectively.  Suppose there exists a factorization $M =
  \Mgood \, \Mbad \in \NN$ such that $2 P Q \Mbad^2 < M$, and suppose also that
  there exists a rational $p/q \in \QQ$ with $|p| \le P$ and $1 \le q \le
  Q$ which satisfies $p \equiv qX \bmod \Mgood$.  Then $\frac{p}{q}$ is
  unique, and is given by
  $$\frac{p}{q} = X - M \cdot \frac{R}{S}$$
  where $\frac{R}{S}$ is the last continued fraction approximant to $\frac{X}{M}$
  with denominator $\le Q \Mbad$; moreover, the next approximant has
  denominator $> \Mgood/2 |p|$.

\end{theorem}

\begin{proof}

By hypothesis we have $p = qX - k \Mgood$ for some $k \in \ZZ$.
Dividing by $qM$ we obtain:
\begin{equation}\label{eq1}
\frac{p}{qM} = \frac{X}{M} - \frac{k}{q \Mbad}
\end{equation}
We shall write $\frac{R}{S}$ for the normalized form of $\frac{k}{q \Mbad}$; thus
$\gcd(R,S)=1$ and $0 < S \le q \Mbad \le Q \Mbad$.

The condition $2 P Q \Mbad^2 < M$ implies that $|p| < \frac{\Mgood}{2 Q \Mbad}$.
We use this to estimate how well $\frac{R}{S}$ approximates $\frac{X}{M}$:

$$ \left| \frac{X}{M} - \frac{R}{S} \right| \quad=\quad \frac{|p|}{qM}
   \quad<\quad  \frac{1}{2 q Q \Mbad^2} \quad\le\quad \frac{1}{2S^2} $$

Applying Theorem~184 from~\cite{HW1979} we see that $\frac{R}{S}$ is indeed
one of the continued fraction approximants for $\frac{X}{M}$.  Next we
show that $\frac{R}{S}$ is the \textit{last} approximant with denominator
$\le Q \Mbad$.

We start by showing that if $\frac{r}{s}$ is
any rational number with $1 \le s \le \frac{\Mgood}{2 |p|}$ and different from $\frac{R}{S}$ then
$\left| \frac{X}{M} - \frac{r}{s} \right| \ge \left| \frac{X}{M} - \frac{R}{S} \right|$.
First note that:
$$
\left| \frac{r}{s} - \frac{R}{S} \right| \quad\ge\quad
\frac{1}{sS} \quad\ge\quad \frac{2 |p|}{\Mgood} \cdot \frac{1}{q \Mbad} \quad=\quad
\frac{2 |p|}{q M}
$$
Whence $\left| \frac{X}{M} - \frac{r}{s} \right| \ge
\left| \frac{r}{s} - \frac{R}{S} \right| - \left| \frac{R}{S} - \frac{X}{M} \right| \ge
\frac{|p|}{q M} = \left| \frac{X}{M} - \frac{R}{S} \right|$.
Therefore, any approximant coming after $\frac{R}{S}$, and hence closer to $\frac{X}{M}$, must have
denominator $> \Mgood/2 |p|$.

The claim that $\frac{p}{q} = X-M \cdot \frac{R}{S}$ follows immediately from equation~(\ref{eq1}).

\end{proof}

\begin{corollary} \label{largest-partial-quotient}

  Let $j$ be the index of the approximant $\frac{R}{S}$ in
  Theorem~\ref{main-thm}.  Let $\Mgood^* = \gcd(p - qX, M)$, and
  $\Mbad^* = \frac{M}{\Mgood^*}$.  Let $\Qmax$ be the greatest integer
  strictly less than $\frac{\Mgood^*}{2 |p| \Mbad^*}$.
  Then the $(j+1)$-th partial quotient is at least $\frac{\Qmax}{q}-1$.  If
  $\Mgood^* \ge 2 |p| q \Mbad^* \bigl(\max(2 |p|,q) \Mbad^* + 2 \bigr)$ then the
  $(j+1)$-th partial quotient is the largest of all.

\end{corollary}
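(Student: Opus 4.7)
My plan is to sharpen Proposition~\ref{main-prop} by re-applying it with the tightest possible data: the true good and bad moduli $\Mgood^*, \Mbad^*$, the exact numerator bound $|p|$, and the largest denominator bound the hypotheses permit, namely $\Qmax$.  If $q > \Qmax$ then $\Qmax/q - 1 < 0 \le a_{j+1}$ and the first claim is immediate, so I assume $q \le \Qmax$.

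I would then verify the preconditions of Proposition~\ref{main-prop} applied with $P := |p|$, $Q := \Qmax$, $\Mgood := \Mgood^*$ and $\Mbad := \Mbad^*$: the numerical precondition $2|p|\Qmax(\Mbad^*)^2 < M$ is exactly the defining inequality for $\Qmax$ multiplied by $\Mbad^*$, and $p/q$ itself certifies existence since $\Mgood^* \mid p - qX$ by definition of $\Mgood^*$.  The uniqueness part of Proposition~\ref{main-prop} then forces the new approximant to coincide with the original $R/S$ (both being the normalized form of $(qX - p)/(qM)$), but now with the \emph{stronger} conclusion that $R/S$ is the last CF approximant to $X/M$ with denominator not exceeding $\Qmax \Mbad^*$.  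Consequently $s_{j+1} > \Qmax \Mbad^*$, while the argument in the proof of Proposition~\ref{main-prop} still yields $s_j = S \le q \Mbad^*$.  Combining these with $s_{j+1} < (a_{j+1}+1)s_j$ from~(\ref{cfa-num-den-growth}) gives $a_{j+1} > \Qmax/q - 1$, which is the first claim.

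For the second claim I would bound every other partial quotient.  After normalizing $0 \le X < M$ (a shift which only alters $a_0$ and leaves the whole statement invariant), $a_0 = 0$.  For $1 \le k \le j$, monotonicity of the approximant denominators together with~(\ref{cfa-num-den-growth}) gives $a_k \le s_k/s_{k-1} \le s_j \le q\Mbad^*$; for $k \ge j+2$, the same inequality plus $s_k \le M$ and $s_{k-1} \ge s_{j+1}$ gives $a_k \le M/s_{j+1} < \Mgood^*/\Qmax$.  To conclude that $a_{j+1}$ dominates, it suffices to show $a_{j+1}$ strictly exceeds both $q\Mbad^*$ and $\Mgood^*/\Qmax$.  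Combining $a_{j+1} > \Qmax/q - 1$ from Step~1 with the cheap lower bound $\Qmax \ge \Mgood^*/(2|p|\Mbad^*) - 1$ turns each of these into a polynomial condition on $\Mgood^*$; the factor $\max(2|p|,q)\Mbad^* + 2$ in the hypothesis is precisely what makes whichever of the two cases dominates also imply the other.

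The main obstacle is this last piece of algebra.  The two polynomial inequalities one must check differ in whether $2|p|$ or $q$ appears in the ``quadratic'' slot, which is why the hypothesis contains a $\max(2|p|,q)$; and the ``$+2$'' offset is there to absorb the integer rounding coming from the floor-like definition of $\Qmax$ together with the strict inequality $s_{j+1} > \Qmax\Mbad^*$.  Once the book-keeping is done, all the real content is supplied by the uniqueness in Proposition~\ref{main-prop} and the elementary growth inequality~(\ref{cfa-num-den-growth}).
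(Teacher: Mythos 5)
Your proposal is correct and follows essentially the same route as the paper: re-apply Proposition~\ref{main-prop} with the exact factorization $M=\Mgood^*\Mbad^*$ and exact bounds to pin down $s_j = S \le q\Mbad^*$ together with a lower bound on $s_{j+1}$, then invoke~(\ref{cfa-num-den-growth}) for the first claim and bound all the other partial quotients for the second. The only cosmetic difference is that the paper takes $Q=q$ (so no case split on $q\le\Qmax$ is needed) and uses the clause ``last approximant with denominator $\le \Mgood/2|p|$'' directly, which yields the slightly cleaner bound $a_k < 2|p|\Mbad^*$ for $k\ge j+2$ in place of your $\Mgood^*/\Qmax$ and thereby simplifies the final book-keeping you leave implicit.
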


\begin{proof}

  Observe that $\Mgood^* \ge \Mgood$ and $\Mbad^* \le \Mbad$
  regardless of the original factorization $M = \Mgood \Mbad$ used in the
  theorem.

  By applying the theorem with $P = |p|$ and $Q = q$, and using the
  factorization $M=\Mgood^* \Mbad^*$ we see that $S \le q \Mbad^*$;
  furthermore the $(j+1)$-th approximant has denominator greater than
  $\Mgood^*/2 |p| > \Qmax \Mbad^*$.
  Thus by the final inequality of formula~(\ref{cfa-num-den-growth})
  the $(j+1)$-th partial quotient must be at least $\frac{\Qmax}{q} -1$.

  Since~$S$, the denominator of the $j$-th approximant, is at most $q \Mbad^*$ no
  partial quotient with index less than or equal to~$j$ can exceed $q \Mbad^*$.
  Also, since the denominator of the $(j+1)$-th approximant is greater than
  $\Mgood^*/2 |p|$ and the denominator of the final approximant is at most~$M$,
  every partial quotient with index greater than $j+1$ is less than $2 |p| \Mbad^*$.

  The hypothesis relating $\Mgood^*$ to $\Mbad^*$ thus guarantees that the $(j+1)$-th
  partial quotient is the largest.
\end{proof}

\begin{example}

  Let $X=7213578109$ and $M=101 \times 103 \times 105 \times 107 \times
  109$.  Let $P=Q=100$.  By magic we know that $\Mbad = 101$, so we seek
  the last approximant to $\frac{X}{M}$ with denominator at most $Q \Mbad =
  10100$.  It is the $10$-th approximant and has value $\frac{R}{S} =
  2116/3737$.  Hence the candidate rational is $\frac{p}{q} = X - M \cdot
  \frac{R}{S} = \frac{13}{37}$ which does indeed satisfy the numerator and
  denominator bounds.  The next approximant has denominator $9701939$, and as
  predicted by the theorem this is greater than $\Mgood/2|p| \approx 4851359$.
  The next partial quotient is $2596 > \frac{\Qmax}{q} -1 \approx 1297$ as predicted
  by the corollary.

\end{example}

\section{The Fault Tolerant Rational Reconstruction Algorithm}
\label{FTRR}

We present our first algorithm for reconstructing rational numbers based on
Theorem~\ref{main-thm}.  The algorithm expects as inputs:
\begin{itemize}
\setlength{\itemsep}{-3pt}
\item  a set of
  residue-modulus pairs $\{ x_i \bmod m_i : i = 1,\ldots,s \}$,

\item  upper bounds
$P$ (for the numerator), and $Q$ (for the denominator) of the rational to be
reconstructed,

\item an upper bound $e$ for the number of bad residue-modulus pairs.
\end{itemize}

We recall that the moduli $m_i$ are coprime, and are in increasing order so that $m_1 <
m_2 < \cdots < m_s$.  We define $\Mmax = m_{s-e+1} m_{s-e+2} \cdots m_s$, the product
of the~$e$ largest moduli;
this implies that $\Mbad \le \Mmax$ and $\Mgood \ge m_1 m_2 \cdots m_s$.
Thus to be able to apply Theorem~\ref{main-thm} we require that
\begin{equation} \label{FTRR-precondition}
M=m_1 m_2 \cdots m_s > 2 P Q \Mmax^2
\end{equation}
Comparing this with the condition given in~\cite{WGD1982} we see that an extra
factor of $\Mmax^2$ appears: this is to allow for a loss of information
``up to $\Mmax$'', and to allow for an equivalent amount of redundancy
requisite for proper reconstruction.  If the denominator bound $Q = 1$ then
the precondition~(\ref{FTRR-precondition}) simplifies to that for
the \textit{Continued Fraction Method}~\cite{Abb1991}.

\subsection{The FTRR Algorithm}

The main loop of this algorithm is quite similar to that in~\cite{WGD1982}: it just
runs through the continued fraction approximants for $X/M$, and selects the last
one with ``small denominator''; there is a simple final computation to produce the answer.

\begin{description}
\setlength{\itemsep}{0pt}

\item [1] Input $e$, $P$, $Q$, and $\{ x_i \bmod m_i : i=1,\ldots,s \}$
\item [2] If $x_i \equiv 0 \bmod m_i$ for at least $s-e$ indices~$i$ then return 0.
\item [3] Set $M = \prod_i m_i$.  Compute integer $X$ satisfying $X \equiv x_i \bmod m_i$
for each~$i$ (via Chinese remaindering).
\item [4] Compute $\Mmax = m_{s-e+1} m_{s-e+2} \cdots m_s$.
\item [5] If $\gcd(X,M) > P \Mmax$ then return \textsc{failure}.
\item [6] Put $u = (1, 0, M) \in \ZZ^3$ and $v = (0, 1, X) \in \ZZ^3$.
\item [7] While $|v_2| \le Q \Mmax$ do
\item [7.1] $q = \lfloor u_3/v_3 \rfloor$
\item [7.2] $u = u-qv$; swap $u \longleftrightarrow v$
\item [8] Set $r = X+M \cdot \frac{u_1}{u_2}$ as a normalized rational.
\item [9] Check whether $r$ is a valid answer:\\
\ie~$|\num(r)| \le P$ and $\den(r) \le Q$ and at most~$e$ bad moduli.
\item [10] If $r$ is valid, return $r$; otherwise return \textsc{failure}.
\end{description}

\noindent
\textbf{Note} that in the algorithm the successive values of $-\frac{u_1}{u_2}$ at
the end of each iteration around the main loop are just the continued
fraction approximants to $X/M$.

\begin{example}
For some inputs to algorithm \textit{FTRR} there is no valid answer.
If the input parameters are $e=0$, $P=Q=1$ and $x_1=2$ with
modulus $m_1 = 5$ then with the given bounds the only possible valid
answers are $\{-1,0,1\}$ but $2 \bmod 5$ does not correspond to any of
these~---~the check in step~(9) detects this.
\end{example}

\subsection{Correctness of FTRR Algorithm}

We show that \textit{FTRR} finds the right answer if it exists,
and otherwise it produces \textsc{failure}.

\medskip

We first observe that if the correct result is~$0$ and at most~$e$
residue-modulus pairs are faulty then step~(2) detects this, and rightly
returns~$0$.  We may henceforth assume that the correct answer, if it exists, is a
non-zero rational $\frac{p}{q} \in \QQ$ with $|p| \le P$ and $1 \le q \le Q$.

\begin{lemma}
If there is a valid non-zero solution $p/q$ then $\gcd(X,M) \le P \Mmax$.
\end{lemma}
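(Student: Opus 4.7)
The plan is to factor $\gcd(X,M)$ according to the good/bad split of the moduli and bound each piece separately.

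First I would introduce notation: let $G \subseteq \{1,\ldots,s\}$ be the set of good indices (so $|G| \ge s-e$), let $B$ be the complementary set of bad indices, and set $\Mgood = \prod_{i \in G} m_i$ and $\Mbad = \prod_{i \in B} m_i$. Since $|B| \le e$ and the $e$ largest moduli are used to form $\Mmax$, one gets $\Mbad \le \Mmax$. Writing $d = \gcd(X,M)$ and using that the $m_i$ are pairwise coprime, I would decompose $d = d_g \cdot d_b$ where $d_g = \gcd(X,\Mgood)$ and $d_b = \gcd(X,\Mbad)$, so in particular $d_b \le \Mbad \le \Mmax$.

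The core of the argument is to show $d_g \mid p$ (which will give $d_g \le |p| \le P$ since $p \ne 0$). For each good index $i$, validity gives $qx_i \equiv p \pmod{m_i}$; Chinese remaindering of $X$ from the $x_i$ yields $qX \equiv p \pmod{m_i}$. Now let $g_i = \gcd(X,m_i)$. Then $g_i \mid X$ implies $g_i \mid qX$, while $g_i \mid m_i \mid (qX-p)$, so $g_i \mid p$. The $g_i$ for distinct good indices are pairwise coprime (they divide coprime $m_i$), so their product divides $p$; but that product is exactly $\gcd(X,\Mgood) = d_g$. Hence $d_g \le |p| \le P$.

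Combining, $\gcd(X,M) = d_g \, d_b \le P \cdot \Mmax$, which is the desired bound. The only delicate point is the divisibility step $d_g \mid p$, and even there the pairwise coprimality of the moduli reduces it to the local statement $\gcd(X,m_i) \mid p$ for each good $i$; the rest is bookkeeping.
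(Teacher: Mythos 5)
Your proof is correct and follows essentially the same route as the paper: factor $\gcd(X,M)$ over the pairwise coprime moduli, show the ``good'' part divides $p$ (hence is at most $P$, using $p\ne 0$), and bound the ``bad'' part by $\Mmax$. The only cosmetic difference is that you work with $\gcd(X,m_i)$ where the paper writes $\gcd(x_i,m_i)$; these coincide since $X\equiv x_i \bmod m_i$.
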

\begin{proof}
As the $m_i$ are coprime $\gcd(X,M) = \prod_i \gcd(x_i,m_i)$.  If the modulus
$m_i$ is good then $\gcd(x_i,m_i) \,|\, p$; conversely, if $\gcd(x_i,m_i)
\,\notdiv\, p$ then $m_i$ is a bad modulus.  Hence $\prod_{m_i
  \hbox{\scriptsize good}} \gcd(x_i,m_i) \,|\, p$; while $\prod_{m_i
  \hbox{\scriptsize bad}} \gcd(x_i,m_i) \le \prod_{m_i \hbox{\scriptsize
    bad}} m_i \le \Mmax$.  It is now immediate that $\gcd(X,M) \le P
\Mmax$.
\end{proof}

From the lemma we deduce that the check in step~(5) eliminates only $(X,M)$
pairs which do not correspond to a valid answer.  We also observe that for
all $(X,M)$ pairs which pass the check in step~(5) the denominator of the
normalized form of $X/M$ is at least $2 Q \Mmax$, so the loop exit
condition in step~(7) will eventually trigger.

\medskip

The values~$X$ and~$M$ computed in step~(3) are precisely the
corresponding values in the statement of Theorem~\ref{main-thm}.
However, we do not know the correct factorization $M = \Mgood \Mbad$;
but since there are at most~$e$ bad residue-modulus pairs we do know that
$\Mbad \le \Mmax$, and this inequality combined with the
requirement~(\ref{FTRR-precondition}) together imply that $2 P Q \Mbad^2 <
M$ so we may apply the proposition.  
Thus the algorithm simply has to find the last continued fraction
approximant $\frac{R}{S}$ with denominator not exceeding $Q \Mmax$,
which is precisely what the main loop does: at the end of each iteration
$-\frac{u_1}{u_2}$ and $-\frac{v_1}{v_2}$ are successive approximants
to $\frac{X}{M}$, and the loop exits when $|v_2| > Q \Mmax$.

So when execution reaches step~(8), the fraction $-\frac{u_1}{u_2}$ is
precisely the approximant $\frac{R}{S}$ of the proposition.  Thus
step~(8) computes the candidate answer in $r$, and step~(9) checks
that the numerator and denominator lie below the bounds~$P$ and~$Q$,
and that there are no more than~$e$ bad moduli.  If the checks pass,
the result is valid and is returned; otherwise the algorithm reports
\textsc{failure}.

\subsection{Which Residues were Faulty?}
\label{identify-bad-moduli}

Assume the algorithm produced a normalized rational $p/q$, and we want to
determine which moduli (if any) were faulty.  We could simply check which
images of $p/q$ modulo each $m_i$ are correct.  However, there is another,
more direct way of identifying the bad moduli: we show that the bad $m_i$
are exactly those which have a common factor with~$S$, that is the final
value of $u_2$.

If $m_i$ is a good modulus then we have $\gcd(m_i, q)=1$ because otherwise
if the gcd were greater than~$1$ then $p \equiv q x_i \bmod m_i$
implies that the gcd divides~$p$ too, contradicting the assumption
that~$p$ and~$q$ are coprime.

Multiplying equation~(\ref{eq1}) from the proof of Theorem~\ref{main-thm}
by $q M$ we obtain $p = q X - M \cdot \frac{R}{S}$ whence $M \cdot \frac{R}{S}$ is an integer.
By definition of a bad modulus $m_i$ we must have $M\cdot \frac{R}{S} \not\equiv 0 \bmod m_i$.
Since $m_i \,|\, M$, we must have $\gcd(m_i, S) > 1$.

\section{The Heuristic Algorithm}
\label{HRR}

The main problem with the \textit{FTRR} Algorithm is that we do not generally
know good values for the input bounds $P,Q$ and~$e$.  In this
\textit{heuristic} variant the only inputs are the residue-modulus pairs; the
result is either a rational number or an indication of \textsc{failure}.  The
algorithm is heuristic in that it may (rarely) produce an incorrect result,
though if sufficiently many residue-pairs are input (with fewer than $\frac{1}{3}$ of
them being bad) then the result will be correct.

\subsection{Algorithm HRR: Heuristic Rational Reconstruction}

\begin{description}
\item [1] Input $x_i \bmod m_i$ for $i=1,\ldots,s$.  Set $\Acrit = 10^{6}$ (see note below).
\item [2] Put $M = \prod_i m_i$.  Compute $X \in \ZZ$ such that $|X| < M$ and $X \equiv x_i \bmod m_i$ via Chinese remaindering.
\item [3] If $\gcd(X,M)^2 > \Acrit M$ then return $0$.
\item [4] Let $\Amax$ be the largest partial quotient in the continued fraction of $X/M$.\\
          If $\Amax < \Acrit$ then return \textsc{failure}.
\item [5] Put $u = (1, 0, M) \in \ZZ^3$ and $v = (0, 1, X) \in \ZZ^3$, and set $q=0$.
\item [6] While $q \not= \Amax$ do
\item [6.1] $q = \lfloor u_3/v_3 \rfloor$
\item [6.2] $u = u-qv$; swap $u \longleftrightarrow v$
\item [7] Return $N/D$ the normalized form of $X + M u_1/u_2$; we could also return $\Mbad = \gcd(M,u_2)$.
\end{description}

The idea behind the algorithm is simply to exploit
Corollary~\ref{largest-partial-quotient} algorithmically.  This corollary
tells us that, provided $\Mgood$ is large enough relative to $\Mbad$, we
can reconstruct the correct rational from the last approximant before the
largest partial quotient.  Moreover, if the proportion of residue-modulus
pairs which are bad is less than $\frac{1}{2}$ then $\Mgood$ will eventually become
large enough (provided the moduli are all roughly the same size).

Since zero requires special handling, there is a special check in step~(3) for this
case.  The heuristic will produce zero if ``significantly more than half of the
residues'' are zero~---~strictly this is true only if all the moduli are prime and of
about the same magnitude.

\subsubsection*{The role of $\Acrit$}

To avoid producing too many \textit{false positives} we demand that the largest
partial quotient be greater than a certain threshold, namely $\Acrit$.
The greater the threshold, the less likely we will get a false positive;
but too great a value will delay the final recognition of the correct value.
The suggested value $\Acrit = 10^6$ worked well in our trials.

\subsubsection*{Alternative criterion for avoiding false positives}

Our implementation in CoCoALib~\cite{CoCoALib} actually uses a slightly different
``convincingness criterion'' in step~(4).  Let $\Amax$ be the largest partial
quotient, and $\Anext$ the second largest.  Our alternative criterion is
to report \textsc{failure} if $\Amax/\Anext$ is smaller than a given
threshold~---~in our trials a threshold value of $4096$ worked well, but
our implementation also lets the user specify a different threshold.

\subsubsection{Complexity of HRR}

Under the natural assumption that each residue satisfies
$| x_i | \le m_i$, we see that the overall complexity of algorithm
\textit{HRR} is $O \bigl( (\log M)^2 \bigr)$, the same as for Euclid's algorithm.
Indeed the chinese remaindering in step~(2) can be done with a modular
inversion (via Euclid's algorithm) and two products for each modulus.  The computation of
the partial quotients in step~(4) is Euclid's algorithm once again.  And
the main loop in step~(6) is just the extended Euclidean algorithm.

We note that the overall computational cost depends on how often
\textit{HRR} is called in the envisaged lifting loop (see
subsection~\ref{envisaged-setting}).  Assuming that the moduli chosen are
all about the same size, a reasonable compromise approach is a
``geometrical strategy'' where \textit{HRR} is called whenever the number
of main iterations reaches the next value in a geometrical progression.
This compromise avoids excessive overlifting and also avoids calling
\textit{HRR} prematurely too often.  The overall cost of \textit{HRR} with
such a strategy remains $O \bigl( (\log M)^2 \bigr)$ where~$M$ here denotes the combined
modulus in the final, successful call to \textit{HRR}.

In practice, if the cost of calling \textit{HRR} is low compared to the
cost of one modular computation in the main loop then it makes sense to
call \textit{HRR} more frequently.  The geometrical strategy should begin only
when (if ever) the cost of a call to \textit{HRR} is no longer relatively
insignifiant.

\subsection{Simultaneous Rational Reconstruction}

In~\cite{BS2011} the authors presented an interesting algorithm for
the simultaneous reconstruction of several rational numbers having a
``small common denominator''; moreover, in certain cases the algorithm
would require a remarkably low combined modulus~---~smaller than the
product of numerator and denominator of some of the reconstructed
rationals.  Nevertheless, it is not obvious how that algorithm could
be modified to handle bad moduli.

The case of simultaneous reconstruction arises, for instance, when the
final result is a vector or polynomial.  While each component of the vector or
each coefficient of the polynomial could be reconstructed separately,
we can do slightly better: for example, we normally expect to find the
same bad moduli for each component/coefficient, and it often happens that
there is a ``small common denominator''.

We outline how \textit{HRR} can be used to reconstruct several
rationals simultaneously; of course, \textit{HRR} can be replaced by
another ``single rational'' reconstruction algorithm.  For simplicity
we assume that the Chinese remaindering has already been done, so we
have a single common modulus~$M$ and several residues $X_1, \ldots,
X_k$ each one corresponding to a rational number to be reconstructed.

\begin{description}
\setlength{\itemsep}{0pt}

\item [1] Input $X_1, \ldots, X_k \in \ZZ$ and the common modulus $M \in \ZZ$ with $M>2$.
\item [2] Set $D=1$; this will be our common denominator.
\item [3] For $i=1,\ldots,k$ do
\item [3.1] Apply \textit{HRR} to $D\,X_i$ and~$M$; if this fails, return \textsc{failure}.
\item [3.2] Let $\Mbad$ be the bad modulus factor; replace $M = M/\Mbad$.
\item [3.3] Let $R/S$ be the reconstructed rational, and set $d = \gcd(R,D)$.
\item [3.4] Set $q_i = \frac{R/d}{SD/d}$.
\item [3.5] Set $D = SD$, the new common denominator.
\item [4] Return $q_1,\ldots,q_k$.
\end{description}

\noindent
Notes:
\begin{itemize}
\setlength{\itemsep}{0pt}
\item Step~(3.2) is useful only if the bad moduli for each coefficient are
      essentially the same; if this is not the case, it can be skipped.
\item The order of the $X_i$ is potentially important; if some of the coefficients
      are expected to be simpler than others then the simpler ones should appear
      at the start~---~\eg~it often happens tha the coefficients of the highest and
      lowest degree terms in a polynomial are simpler than the ``central'' coefficients.
\item Though not strictly necessary, it is probably worth reducing the $X_i$ modulo
      the updated value of $M$ in step~(3.2).
\item In practice it may be useful to return the bad modulus factors found (even in
      the case of \textsc{failure}).
\end{itemize}

\begin{example}
Let $M=12739669845=101 \times 103 \times 105 \times 107 \times 109$, and let $X_1 = -5790759020$, $X_2 = -2410207808$ and $X_3 = -9484324233$.

We start with the common denominator $D=1$.
On iteration $i=1$,
we compute $\HRR(X_1,M) = 5/11$ with no bad moduli.  So we set $q_1=5/11$ and update $D = 11$.

On iteration $i=2$, we compute $\HRR(D\, X_2, M) = 209/37$ with no bad moduli.  So we set $q_2 = \frac{209/37}{D} = 19/37$, and update $D = 407$.

On iteration $i=3$, we compute $\HRR(D \, X_3, M) = 204$ with no bad moduli.  So we set $q_3 = \frac{204}{D} = 204/407$; there is no need to update $D$ since \textit{HRR} produced an integer.

The final answer is $(q_1, q_2, q_3) = (5/11, 19/37, 204/407)$.  Note
that attempting to compute $q_3$ directly by calling $\HRR(X_3,M)$
fails; indeed, multiplying by the common denominator when we computed
$\HRR(D \, X_3, M)$ has let us reconstruct a more complex rational
than we could obtain by direct reconstruction.  This also highlights
the fact that the success of the reconstruction can depend on the order
of the residues $X_i$; had $X_3$ been the first residue the algorithm
would have failed (because the modulus $M$ is ``too small'').
\end{example}

\section{Comparison with Other Methods}
\label{comparison}

\subsection{Reconstruction via Lattice Reduction}

A reconstruction technique based on 2-dimensional lattice reduction is
presented as Algorithm~6 \textit{Error Tolerant Lifting}
(abbr.~\textit{ETL}) in~\cite{BDFP2012}.  This algorithm is similar in
scope to our \textit{HRR}, and not really comparable to our \textit{FTRR}
algorithm (which needs extra inputs from the user).

In practice there are two evident differences between \textit{ETL} and our
\textit{HRR}.  The first is that \textit{ETL} produces many more \textit{false
positives} than \textit{HRR}; our refinement (\textbf{B}) below proposes a way to
rectify this.  The second is that \textit{ETL} finds balanced rationals
more easily than unbalanced ones, \ie~it works best if the numerator and
denominator contain roughly the same number of digits.  For balanced
rationals, \textit{ETL} and \textit{HRR} need about the same number of
residue-modulus pairs; for unbalanced rationals \textit{ETL} usually
needs noticeably more residue-modulus pairs than \textit{HRR}.


\subsubsection{Practical Refinements to \textit{ETL}}

We propose two useful refinements to \textit{ETL} as it is described
in~\cite{BDFP2012}.

\begin{description}
\item[A]  We believe that a final checking step should be added
to the \textit{ETL} algorithm so that it rejects results where half or more
of the moduli are bad.  Consider the following example: the moduli are
$11,13,15,17,19$ and the corresponding residues are $-4,-4,-4,1,1$.  The
rather surprising result produced by \textit{ETL} is~$1$; it seems
difficult to justify this result as being correct.  Here we see explicitly
the innate tendancy of \textit{ETL} to favour ``trusting'' larger moduli
over smaller ones.

\item [B] The aim of this second refinement is to reduce the number of false
positives which \textit{ETL} produces.  We suggest replacing their
acceptance criterion $a_{i+1}^2 + b_{i+2}^2 < N$ by a stricter condition
such as $a_{i+1}^2 + b_{i+2}^2 < N/100$.  This change may require one or
two more ``redundant'' residue-modulus pairs before \textit{ETL} finds the
correct answer, but it does indeed eliminate most of the false positives.
\end{description}



\subsubsection{Comparison of Efficiency}

We define the \textbf{efficiency} of a reconstruction method to be the
logarithm of the combined modulus when reconstruction first succeeds.
Our trials involved reconstructing rationals from a succession of
residue-modulus pairs where the moduli were all about the same size;
so the efficiency is essentially proportional to the number of pairs
required for reconstruction to succeed.  For simplicity, we shall use
the number of pairs as our measure here.

We claim that the \textit{efficiency} is the most appropriate measure
of how well the algorithm performs because the computational cost of
obtaining a new residue-modulus pair (potentially the result of a
lengthy computation such as a Gr\"obner basis) generally far exceeds
the cost of attempting reconstruction, so counting the number of pairs
needed gives a good estimate of actual total computational cost.  This
point of view is valid provided the rational to be reconstructed does
not have especially large numerator or denominator.

\medskip
We have implemented \textit{HRR} and \textit{ETL} in CoCoALib~\cite{CoCoALib} and CoCoA-5~\cite{CoCoA}.
Using these implementations we compared the efficiency of
\textit{HRR} and \textit{ETL} by generating a random rational $N/D$ (with a
specified number of bits each for the numerator and denominator), and then
generating the modular images $x_i \bmod m_i$ where the $m_i$ run through
successive primes starting from~$1013$.  Note that in this first trial there are
no bad residue-modulus pairs.  We then counted how many residue-modulus
pairs were needed by the algorithms before they were able to reconstruct
the original rational.

We then repeated the experiment but this time, with probability 10\%, each
residue was replaced by a random value to simulate the presence of bad
residues.  As expected, the number of residue-modulus pairs needed for
successful reconstruction increased by about 25\%.

In each case the successful reconstruction took less than $0.1$ seconds.

\begin{center}
\begin{tabular}{|l|c|c|c|c|}
\hline
              & $\phantom{\int_q^b}\frac{2000}{0}$ bits & $\frac{1600}{400}$ bits & $\frac{1200}{800}$ bits & $\frac{1000}{1000}$ bits \\
\hline
\textit{HRR} 0\% bad   &   190  &  191  &  190 &  190 \\
\textit{ETL} 0\% bad   &   361  &  293  &  224 &  189 \\
\hline
\textit{HRR} 10\% bad  &   244  &  236  &  246 &  244 \\
\textit{ETL} 10\% bad  &   457  &  375  &  283 &  242 \\
\hline
\end{tabular}
\end{center}

Observe that the number of pairs needed by \textit{HRR} is essentially
constant, while \textit{ETL} matches the efficiency of \textit{HRR} only
for perfectly balanced rationals; as soon as there is any disparity between
the sizes of numerator and denominator \textit{HRR} becomes significantly
more efficient.

\subsection{Combinatorial Methods}
\label{combinatorial}

It is shown in~\cite{Sto1963} that reconstruction of integers by Chinese
Remaindering is possible provided no more than half of the
\textit{redundant residues} are faulty.  The correct value is identified
using a \textit{voting system} (see~\cite{Sto1963} for details).  We can
extend the idea of a voting system to allow it to perform fault tolerant
rational reconstruction: the only difference is that for each subset of
residue-modulus pairs we effect an exact rational reconstruction (rather
than an exact integer reconstruction).  However the problem of poor
computational efficiency remains.

An elegant and efficient scheme for fault-tolerant chinese remaindering for
integers was given in~\cite{Ram1983}; however the method is valid only for at
most one bad modulus.  Several generalizations of Ramachandran's
scheme were given in~\cite{Abb1991}; however, these are practical really
only for at most~$2$ bad moduli.  Like the voting system, these schemes
could be easily adapted to perform fault-tolerant rational reconstruction,
but in the end the \textit{Continued Fraction Method} (upon which \textit{FTRR} is
based) is more flexible and more efficient.

\section{Conclusion}

We have presented two new algorithms for solving the problem of fault
tolerant rational reconstruction, \textit{FTRR} and \textit{HRR}.  The
former is a natural generalization both of the original rational
reconstruction algorithm~\cite{WGD1982} and of the fault tolerant integer
reconstruction algorithm~\cite{Abb1991}.  The latter is a heuristic variant
which is easier to use in practice since it does not require certain bounds
as input.

Our \textit{HRR} algorithm and the \textit{ETL} algorithm
from~\cite{BDFP2012} offer two quite distinct (yet simple) approaches to the
same problem.  They have comparable practical efficiency when
reconstructing balanced rationals, whereas \textit{HRR} is usefully more
efficient when reconstructing unbalanced rationals.

\end{document}